\DeclareMathOperator*{\argmin}{arg\,min}
\providecommand{\abs}[1]{\left\lvert#1\right\rvert}
\providecommand{\norm}[1]{\left\rVert#1\right\rVert}
\providecommand{\set}[1]{ \left\{ #1  \right\}  }
\providecommand{\setb}[2]{ \left\{ #1 \ \middle| \  #2 \right\}  }
\providecommand{\parenth}[1]{\left( #1 \right) }
\newtheorem{theorem}{Theorem}
\newtheorem{proposition}{Proposition}
\newtheorem{remark}{Remark}
\newtheorem{definition}{Definition}
\begin{document}

\title{Fast and accurate log-determinant \\ approximations }

\author{Owen Deen, 
Colton River Waller, 
and
John Paul Ward

\thanks{
This work was supported in part by the NSF under Award No. 1719498 and by the NSA under Award No. H98230-23-1-008
}
\thanks{
J.P. Ward is with the Department of Mathematics and Statistics, North Carolina A\&T State University, Greensboro, NC 27408 USA (e-mail: jpward@ncat.edu). 
}

\thanks{
The authors contributed equally to this work and are listed alphabetically.
}

\thanks{Python code is available on 
\href{https://arxiv.org/}{arXiv} and
\href{https://github.com/owendeen/NCAT_REU_2023}{Github}. 
}
}
\maketitle

\begin{abstract}
We consider the problem of estimating log-determinants of large, sparse, positive definite matrices. A key focus of our algorithm is to reduce computational cost, and it is based on sparse approximate inverses. The algorithm can be implemented to be adaptive, and it uses graph spline approximation to improve accuracy. We illustrate our approach on classes of large sparse matrices.
\end{abstract}

\begin{IEEEkeywords}
graph models, graph splines, log-determinant, positive definite, sparse matrices
\end{IEEEkeywords}

\ifCLASSOPTIONpeerreview
 \begin{center} \bfseries EDICS Category: DSP-SAMP, DSP-SPARSE  \end{center}
\fi

\IEEEpeerreviewmaketitle

\section{Introduction}
\label{sec:introduction}

\IEEEPARstart{I}{n} recent years, log-determinants have been used in a variety of applications. For example, log-determinant divergences are used on spaces of positive definite matrices for image processing, pattern recognition, and computer vision \cite{mei14,cichocki15,peng2022hyperspectral}. Other related applications include rank approximation for Robust PCA and subspace clustering \cite{peng2015subspace,peng2020robust,liu21}.

We propose an algorithm for approximating log-determinants using sparse approximate inverses. 
It builds upon the work \cite{reusken02}. 
Rather than constructing a single large approximate inverse, we compute a sequence of smaller approximate inverses and use the trend of the data to define the final approximation. The smaller approximations appear to develop a clear pattern that we can exploit, cf. \Cref{fig:compare_algs}. 
In this work, we propose using graph-based splines to fit the data and increase the accuracy of the prediction \cite{pesenson10s,shuman13,ward20}.

In \cite[Section 5]{reusken02}, the author discusses the computational complexity  for sparse approximate inverses, and he shows that the cost of a single approximation is less than that for Monte Carlo approaches.  Our technique of computing smaller inverses and fitting the data with a graph further reduces the cost and exhibits higher accuracy. 

The two key advantages to using sparse approximate inverses over competing approaches are reduced memory requirements and parallel computations. To compute a row of a sparse approximate inverse, we only need access to the rows and columns of the original matrix that correspond to the entries being created. Also, we do not need to store the sparse approximate inverse for the approximation of the log-determinant.

\subsection{Background: positive definite matrices}

This section covers relevant properties of (sparse) positive definite matrices. For additional details, we refer the reader to \cite{horn12,watkins04}.

Throughout this work, we consider positive definite matrices $A$ of size $n\times n$. 
The determinant of a matrix $A$ is denoted $\abs{A}$. 
We define an \textit{index set} of an $n\times n$ matrix $A$ to be a subset of $\set{1,2,\dots, n}$. We are primarily interested in nonempty index sets where the indices are increasing. 
The submatrix of $A$ that is formed by restricting to the rows and columns from an index set $\alpha$ is denoted $A(\alpha)$. 
We define $\abs{A(\emptyset)} =1$.

\begin{proposition}\label{prop:unique_LU}
For a positive definite matrix $A$, there are unique matrices $L$, which is lower triangular, and $U$, which is unit upper triangular, such that $A=LU$.
\end{proposition}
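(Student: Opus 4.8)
The plan is to prove existence and uniqueness separately, with the positive definiteness of $A$ entering only through the fact that every leading principal submatrix of $A$ is invertible. First I would record this fact: writing $\alpha_k = \set{1,\dots,k}$, for any nonzero $y \in \mathbb{R}^k$ the embedded vector $x = (y,0)^\top \in \mathbb{R}^n$ is nonzero and $y^\top A(\alpha_k) y = x^\top A x > 0$, so each $A(\alpha_k)$ is itself positive definite. A positive definite matrix cannot annihilate a nonzero vector, so it is invertible. This is the only structural property of $A$ I will use.

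For existence I would induct on $n$. The base case $n=1$ is immediate: $A = (a_{11})$ with $a_{11} > 0$, so $L = (a_{11})$ and $U = (1)$ work. For the inductive step, write $A$ in block form, partitioned after the $(n-1)$st row and column,
\[
A = \begin{pmatrix} A_{n-1} & b \\ c^\top & a_{nn} \end{pmatrix},
\]
where $A_{n-1} = A(\set{1,\dots,n-1})$. By the inductive hypothesis $A_{n-1} = L_{n-1} U_{n-1}$ with $L_{n-1}$ lower triangular and $U_{n-1}$ unit upper triangular. Seeking $L,U$ of the prescribed form with the analogous block structure and multiplying out, one is led to the equations $L_{n-1} u = b$, $\ell^\top U_{n-1} = c^\top$, and $\ell^\top u + \lambda = a_{nn}$ for the new column $u$, row $\ell^\top$, and corner entry $\lambda$. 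Here $U_{n-1}$ is invertible because it is unit upper triangular, and $L_{n-1}$ is invertible because $\abs{A_{n-1}} = \abs{L_{n-1}}\abs{U_{n-1}} = \abs{L_{n-1}} \neq 0$ by the fact above; hence these equations have unique solutions and the factorization exists.

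For uniqueness, suppose $A = L_1 U_1 = L_2 U_2$ with both pairs of the prescribed type. Since $A$ is invertible, all four factors are invertible, and I would rearrange to $L_2^{-1} L_1 = U_2 U_1^{-1}$. The left-hand side is lower triangular and the right-hand side is upper triangular, so both equal a common diagonal matrix $D$; moreover $U_2 U_1^{-1}$ is a product of unit upper triangular matrices and therefore has all diagonal entries equal to $1$, forcing $D = I$. Consequently $L_1 = L_2$ and $U_1 = U_2$.

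The step I expect to be the crux is guaranteeing invertibility of the intermediate factor $L_{n-1}$ (equivalently, nonvanishing of every leading principal minor) so that the elimination can proceed, which is exactly where positive definiteness is indispensable. Some care is warranted if $A$ is not assumed symmetric: one should not appeal to real eigenvalues, but instead argue invertibility directly from the definiteness of the principal submatrices as above.
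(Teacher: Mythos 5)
Your proof is correct and complete, but there is nothing in the paper to compare it against: the paper states this proposition as background material without proof, referring the reader to the standard references \cite{horn12,watkins04}. Your argument is the standard textbook route those references take, and every step checks out. The preliminary observation that each leading principal submatrix $A(\alpha_k)$ inherits positive definiteness (and hence invertibility) is exactly what licenses the induction; the three block equations $L_{n-1}u = b$, $\ell^\top U_{n-1} = c^\top$, $\ell^\top u + \lambda = a_{nn}$ are uniquely solvable precisely because $L_{n-1}$ and $U_{n-1}$ are invertible; and the uniqueness argument correctly reduces to the fact that a matrix which is simultaneously lower triangular and unit upper triangular must be the identity. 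Two of your side remarks add value beyond what the paper needs: first, your proof isolates that positive definiteness enters only through the nonvanishing of leading principal minors, so you have in effect proved the sharper criterion for existence and uniqueness of the $LU$ factorization; second, your caution about non-symmetric matrices is well placed, since the quadratic-form argument for invertibility (if $Av=0$ with $v \neq 0$ then $v^\top A v = 0$, a contradiction) works without symmetry, whereas an eigenvalue argument would not. One could quibble that the paper's later use of the Cholesky factorization implicitly assumes symmetric positive definite matrices, but your more general proof covers that case a fortiori.
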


\begin{proposition}
For a positive definite matrix $A$, 
there is a unique upper triangular matrix $R$, with positive diagonal, 
such that
$A=R^\top R$. 
$R$ is called the Cholesky factor of $A$.
\end{proposition}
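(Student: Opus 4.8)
The plan is to build the Cholesky factor directly from the $LU$ factorization guaranteed by \Cref{prop:unique_LU}, exploiting the symmetry of $A$. First I would write $A = LU$ with $L$ lower triangular and $U$ unit upper triangular. Since a positive definite matrix is symmetric, transposing gives $A = A^\top = U^\top L^\top$, in which $U^\top$ is unit lower triangular while $L^\top$ is upper triangular.

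The next step is to massage this transposed factorization into the normalization of \Cref{prop:unique_LU} so that its uniqueness can be invoked. Let $D$ be the diagonal part of $L$; since $\det A = \det L = \prod_i D_{ii} \neq 0$, the matrix $D$ is invertible, and I may write $L^\top = D U'$ with $U'$ unit upper triangular. Then $A = (U^\top D) U'$ expresses $A$ as a lower triangular matrix $U^\top D$ times a unit upper triangular matrix $U'$, so uniqueness in \Cref{prop:unique_LU} forces $U' = U$ and $U^\top D = L$. Writing $M = U^\top$ (unit lower triangular) yields the symmetric factorization $A = M D M^\top$.

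To establish existence, I would first note that the diagonal entries of $D$ are positive: since $M$ is invertible, $D = M^{-1} A M^{-\top}$ is congruent to $A$ and hence positive definite, and a positive definite diagonal matrix has strictly positive entries. Setting $D^{1/2} = \operatorname{diag}(\sqrt{D_{11}}, \dots, \sqrt{D_{nn}})$ and $R = D^{1/2} M^\top$, the matrix $R$ is upper triangular with positive diagonal and satisfies $A = M D^{1/2} D^{1/2} M^\top = R^\top R$. For uniqueness, suppose $A = R^\top R = S^\top S$ with $R, S$ upper triangular and positive diagonal. Then $Q = R S^{-1}$ satisfies $Q^\top Q = S^{-\top} R^\top R S^{-1} = I$, so $Q$ is orthogonal; being a product of upper triangular matrices it is also upper triangular, and an orthogonal upper triangular matrix must be diagonal with entries $\pm 1$. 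Since the diagonal of $Q$ consists of ratios of the positive diagonals of $R$ and $S$, every sign is $+1$, forcing $Q = I$ and hence $R = S$.

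I expect the main obstacle to be the bookkeeping in the second paragraph: \Cref{prop:unique_LU} normalizes the \emph{upper} factor to be unit triangular, whereas the transpose $A = U^\top L^\top$ naturally normalizes the \emph{lower} factor, so some care is needed to rewrite it in the correct form before the uniqueness statement can be applied. Once the symmetric factorization $A = M D M^\top$ is in hand, the positivity of $D$ via congruence and the orthogonality argument for uniqueness are comparatively routine.
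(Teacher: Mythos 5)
Your proof is correct, and it follows essentially the same route the paper takes: the paper states this proposition without proof (it is background material cited to standard references), but immediately afterwards remarks that if $D$ is the diagonal of $L$ in $A=LU$, then $A=(LD^{-1/2})(D^{1/2}U)$ gives the Cholesky factorization --- exactly the construction you carry out rigorously. Your write-up supplies the details the paper's remark leaves implicit (the identity $L = U^\top D$ via symmetry and the uniqueness of the $LU$ factorization, and the positivity of $D$ via congruence), and additionally proves uniqueness of $R$ by the orthogonal-upper-triangular argument, a point the paper does not address at all.
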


Note that these decompositions are closely connected. 
If $D$ is the diagonal of $L$, 
then $A=(LD^{-1/2})(D^{1/2}U)$ gives the Cholesky factorization of $A$.

\begin{proposition}[Hadamard-Fischer Inequalities \cite{johnson85}]\label{prop:hadamard}
Let $\alpha,\beta$ be index sets for a positive definite matrix $A$.  
Then
\begin{equation}
\abs{A \parenth{\alpha \bigcup \beta}}
\leq
\frac{\abs{A(\alpha)} \abs{A(\beta)}}{\abs{A(\alpha \bigcap \beta)}}
\end{equation}
or equivalently
\begin{equation}
\frac{\abs{A \parenth{\alpha \bigcap \beta}}}{\abs{A(\alpha)}}
\leq
\frac{ \abs{A(\beta)}}{\abs{A(\alpha \bigcup \beta)}}
\end{equation}
\end{proposition}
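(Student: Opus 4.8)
The plan is to prove the first inequality; the second is the same statement rearranged by cross-multiplication (all four determinants are positive since $A$ is positive definite), so it suffices to establish the symmetric form $\abs{A(\alpha\bigcup\beta)}\,\abs{A(\alpha\bigcap\beta)} \leq \abs{A(\alpha)}\,\abs{A(\beta)}$. Since a symmetric permutation of the rows and columns of $A$ preserves positive definiteness and leaves every principal-minor determinant appearing above unchanged, I would first reorder the indices so that $\gamma := \alpha\bigcap\beta$ comes first, followed by $a := \alpha\setminus\beta$ and then $b := \beta\setminus\alpha$. Writing $M = A(\alpha\bigcup\beta)$ in the induced $3\times 3$ block form with diagonal blocks indexed by $\gamma$, $a$, and $b$, the four determinants in the inequality all become principal minors of $M$.

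Next I would eliminate the $\gamma$-block by Schur complementation. Because $A(\gamma)$ is positive definite (it is a principal submatrix of a positive definite matrix), the Schur-complement factorization of the determinant gives $\abs{A(\alpha)} = \abs{A(\gamma)}\,\abs{S_a}$, $\abs{A(\beta)} = \abs{A(\gamma)}\,\abs{S_b}$, and $\abs{A(\alpha\bigcup\beta)} = \abs{A(\gamma)}\,\abs{S}$, where $S_a$ and $S_b$ are the Schur complements of $A(\gamma)$ within $A(\alpha)$ and $A(\beta)$, and
\[
S = \begin{pmatrix} S_a & S_{ab} \\ S_{ab}^\top & S_b \end{pmatrix}
\]
is the Schur complement of $A(\gamma)$ within $M$. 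Dividing through by the common factor $\abs{A(\gamma)}^2$ (and using $\abs{A(\emptyset)}=1$ in the degenerate case $\gamma=\emptyset$), the claim collapses to Fischer's inequality $\abs{S} \leq \abs{S_a}\,\abs{S_b}$ for the positive definite matrix $S$.

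Finally I would prove this block Hadamard inequality by one further Schur complementation, now with respect to $S_a$: since $S$ and $S_a$ are positive definite, $\abs{S} = \abs{S_a}\,\abs{S_b - S_{ab}^\top S_a^{-1} S_{ab}}$, and the correction term $S_{ab}^\top S_a^{-1} S_{ab}$ is positive semidefinite, so $0 \prec S_b - S_{ab}^\top S_a^{-1} S_{ab} \preceq S_b$. Invoking monotonicity of the determinant on the positive definite cone in the Loewner order then yields $\abs{S_b - S_{ab}^\top S_a^{-1} S_{ab}} \leq \abs{S_b}$, and hence $\abs{S} \leq \abs{S_a}\,\abs{S_b}$.

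The routine algebra here is the block-determinant (Schur) identity; the parts that actually carry the inequality are two structural facts that must be in place: that every Schur complement of a positive definite matrix is again positive definite (so that all the factorizations are valid and $S$ qualifies for Fischer's inequality), and that $0 \prec X \preceq Y$ implies $\abs{X} \leq \abs{Y}$. I expect the latter monotonicity step to be the main obstacle to state cleanly — it follows, for instance, from $\abs{Y}/\abs{X} = \abs{X^{-1/2} Y X^{-1/2}}$ together with $X^{-1/2} Y X^{-1/2} \succeq I$, so that this matrix has all eigenvalues at least one and determinant at least one — but it is the single place where positive definiteness, rather than mere invertibility, is essential.
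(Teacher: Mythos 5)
Your proof is correct, but there is nothing in the paper to compare it against: the paper states this proposition as background, cites it to the literature on Hadamard--Fischer inequalities, and gives no proof (it only \emph{uses} the result, in the proof of \Cref{theorem:monotonicity}). What your proposal supplies, then, is a self-contained derivation of a fact the paper delegates to a reference. The argument itself is sound at every step: the symmetric permutation to the ordering $(\gamma, a, b)$ changes none of the four principal minors; the identity $\abs{A(\alpha\bigcup\beta)} = \abs{A(\gamma)}\abs{S}$ together with the observation that the diagonal blocks of $S$ are exactly the Schur complements of $A(\gamma)$ inside $A(\alpha)$ and $A(\beta)$ (true because those formulas involve only the rows and columns indexed by $a\cup\gamma$, respectively $b\cup\gamma$) reduces the claim to Fischer's inequality $\abs{S}\leq\abs{S_a}\abs{S_b}$; and your final monotonicity step via $X^{-1/2}YX^{-1/2}\succeq I$ is the standard, correct way to carry the inequality, with positive definiteness of all Schur complements guaranteeing each factorization is legitimate. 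If you write this up, add one sentence for each degenerate case: when $\gamma=\emptyset$ the first complementation is vacuous and the paper's convention $\abs{A(\emptyset)}=1$ keeps the bookkeeping consistent, and when $a=\emptyset$ or $b=\emptyset$ (i.e.\ the index sets are nested) the inequality is an identity, so your block decomposition of $S$ is never applied to an empty block. With those remarks included, your argument is a complete and more informative substitute for the paper's citation, and notably it proves the inequality by the same mechanism (bottom-right entries of inverses via determinant ratios, Schur complements) that the paper exploits computationally elsewhere.
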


\subsection{Background: graphs and splines}

We consider finite graphs of the form 
$\mathcal{G} = \set{\mathcal{V},E\subset \mathcal{V}\times \mathcal{V},w}$.
The vertex set $\mathcal{V}$ contains at least two nodes. The edge set is $E$. We assume that there are no multiple edges nor loops in the graph.  The weight function $w:E\rightarrow \mathbb{R}_{>0}$ specifies the closeness of two vertices.

The adjacency matrix $A$ is an $n\times n$ matrix, where $n$ is the number of vertices in the graph. The columns and rows are indexed by the vertices. The $i,j$ entry is zero for pairs of vertices that are not connected by an edge. If vertex $i$ and vertex $j$ are connected by an edge, the $i,j$ entry is the weight of the edge connecting them.

The degree matrix $D$ is a diagonal matrix whose entries are the row sums of the adjacency matrix. The Laplacian on the graph is the matrix $\mathcal{L}=D-A$. 

If a function $g$, living on a graph, is known at some vertices $\mathcal{V}_k$, and unknown at others $\mathcal{V}_u$, then we can approximate the unknown values using a 
polyharmonic
graph spline. The approximation is computed to minimize the norm of the Laplacian applied to the function. Let $\mathcal{L}_k,\mathcal{L}_u$ represent the Laplacian restricted to the columns corresponding to the vertices of $\mathcal{V}_k,\mathcal{V}_u$, respectively. If  $g_k$ are the known values of $g$, then the spline approximation $g_u$ to $g$ on $\mathcal{V}_u$ is defined by 
\begin{equation}
g_u 
= 
\argmin_{f} 
\norm{\mathcal{L}_u f + \mathcal{L}_k g_k}_2^2
\end{equation}
which can be solved explicitly using the normal equations
\begin{equation}
\mathcal{L}_u^\top \mathcal{L}_u g_u 
= 
\mathcal{L}_u^\top \mathcal{L}_k g_k
\end{equation}
Further details are given in \Cref{subsec:sparsity}, and an algorithm is provided in \Cref{subsec:algorithm_spline}.

\section{Sparse approximate inverses}
\label{sec:sparsity}

For the benefit of the reader, in the first part of this section we recall the construction of sparse approximate inverses as described in \cite{reusken02}. 

Consider the $LU$-decomposition $A=LU$, which can be written as $L^{-1}A=U$. 
We approximate $L^{-1}$ by 
$\hat{L}^{-1}$, which is lower triangular and has a specified 
sparsity pattern. 
The corresponding approximation to $U$ is $\hat{U}$, which is equivalent to the identity matrix on the given sparsity pattern. These conditions determine a unique approximation \cite[Lemma 3.1]{reusken02}. 

To make this more precise, a sparsity pattern $E$ for a lower triangular matrix is a subset of 
$\setb{(i,j)}{1\leq j\leq i \leq n}$,
where we require $(i,i)$ to be in $E$ for all $i$.
These are the locations where $\hat{L}^{-1}$ is allowed to be nonzero. We require $\hat{L}^{-1}$ to be 0 on the complement of $E$.
Then $\hat{L}^{-1}$ is constructed from the defining equations
\begin{equation}\label{eq:def_Lhat}
\parenth{\hat{L}^{-1}A}_{i,j} =I_{i,j},
\quad
\forall (i,j)\in E
\end{equation}
where $I$ is the $n\times n$ identity. 
Note that  
$\hat{L}$ is lower triangular, while $\hat{U}$ may not be upper triangular.

We can compute $\hat{L}^{-1}$ one row at a time from \eqref{eq:def_Lhat}. 
\begin{definition}
We use the following notation:
\begin{itemize}
\item
$E_i$ : subset of $E$ corresponding to row $i$
\item 
$b_i$ : $i$th standard basis (column) vector in $\mathbb{R}^n$ restricted to $E_i$
\item 
$A_i$ : submatrix of $A$ with rows and columns from $E_i$ 
\item 
$\ell_i$ : row $i$ of $\hat{L}^{-1}$ restricted to $E_i$ (viewed as a column vector)
\end{itemize}
\end{definition}

Note that $A_i$ can be viewed as $P A P^\top$, where $P$ is a projection matrix corresponding to $E_i$.

Considering \eqref{eq:def_Lhat}, $\ell_i$ is defined by 
\begin{equation}
\ell_i^\top A_i = b_i^\top
\end{equation}
or equivalently
\begin{equation}\label{eq:row_computation}
A_i \ell_i = b_i
\end{equation}

\begin{remark}
In the construction of $\ell_i$, we only need the submatrix $A_i$, and the system to be solved can be substantially smaller than the size of $A$. As noted in \cite{reusken02}, these properties can be exploited for reduced memory requirements as well as parallelization. 
\end{remark}

\subsection{Sparsity model}
\label{subsec:sparsity}

In \cite{reusken02}, the author used a single sparse approximate inverse for the approximation of $\abs{A}^{1/n}$. Our method, detailed below, is an extension using more than one sparsity pattern in the approximation of the log-determinant of a matrix $A$. 

We consider all possible sparsity patterns as living on a graph $\mathcal{G}$. The vertices are the sparsity patterns. Edges exist between sparsity patterns 
$E\subset \bar{E}$ if $E$ contains exactly one less entry than $\bar{E}$.

We define the 
density
of a pattern  $E$ to be the number of entries divided by the total number possible, $n(n+1)/2$. The distance between adjacent patterns is the absolute value of the difference in their 
densities.

Rather than work with the full graph, we typically only compute values at a few vertices, so we collapse the graph to the known vertices and a vertex to be approximated. The distances are inherited from the full graph. The weights of the edges are the inverses of the distances on the smaller graph.

\subsection{Properties of sparse approximate inverses}

To approximate the log-determinant, we use the negative of the log-determinant of $\hat{L}^{-1}$. This means we only need the diagonal of $\hat{L}^{-1}$ in our approximation.

Considering \eqref{eq:row_computation}, we present three approaches to finding the diagonal element of $\hat{L}^{-1}$ in row $i$. 

First, we can formulate this as 
\begin{align}
\begin{split}
\ell_i &= A_i^{-1}b_i\\
b_i^\top \ell_i &= b_i^\top A_i^{-1}b_i\\
\end{split}
\end{align}
so the diagonal element of $\hat{L}^{-1}$ (the last entry in $\ell_i$) is the bottom right entry of $A_i^{-1}$.

Another possibility is to use the LU-decomposition 
$A_i=L_iU_i$. In this case, we have
\begin{align}
\begin{split}
A_i^{-1} &= U_i^{-1}L_i^{-1}\\
b_i^\top A_i^{-1} b_i
&= 
b_i^\top  U_i^{-1}L_i^{-\top} b_i\\ 
&= 
b_i^\top L_i^{-1} b_i\\
\end{split}
\end{align}
so the last entry in $\ell_i$ is the bottom right entry of $L_i^{-1}$, which is also the inverse of the bottom right entry of $L_i$.

Finally, using the Cholesky factorization 
$A_i=R_i^\top R_i$, we have
\begin{align}
\begin{split}
A_i^{-1} &= R_i^{-1}R_i^{-\top}\\
b_i^\top A_i^{-1} b_i
&= 
b_i^\top  R_i^{-1}R_i^{-\top} b_i\\
b_i^\top A_i^{-1} b_i 
&= 
\parenth{R_i^{-\top} b_i}^\top 
\parenth{R_i^{-\top} b_i}\\
\end{split}
\end{align}
Using the triangular structure of $R_i$, the diagonal element of $\hat{L}^{-1}$ is the square of the inverse of the bottom right entry of the Cholesky factor of $A_i$. 

Note that the last is the formulation proposed in \cite{reusken02}. 
We use the penultimate version in our algorithm. 
The first is of theoretical value, and it is used in our monotonicity proof below.

\begin{proposition}\label{prop:exact}
If $E=\setb{(i,j)}{1\leq j\leq i \leq n}$, then the resulting $\hat{L}^{-1}$ is equal to $L^{-1}$. Hence the corresponding approximation is exact.
\end{proposition}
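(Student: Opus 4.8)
The plan is to show that $L^{-1}$ itself satisfies the defining conditions of $\hat{L}^{-1}$ for the full pattern $E = \setb{(i,j)}{1\leq j \leq i \leq n}$, and then appeal to the uniqueness of the construction (cited as \cite[Lemma 3.1]{reusken02}) to conclude that $\hat{L}^{-1} = L^{-1}$. Since the full lower-triangular pattern imposes no sparsity constraint beyond lower-triangularity itself, the only work is to verify that $L^{-1}$ reproduces the identity on $E$ through \eqref{eq:def_Lhat}.

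First I would note that $L^{-1}$ is lower triangular, being the inverse of the lower-triangular factor $L$ from \Cref{prop:unique_LU}; hence it vanishes on the complement of $E$ (the strict upper triangle), matching the required zero pattern of $\hat{L}^{-1}$. Next, from $A = LU$ I would write $L^{-1}A = U$, so that $\parenth{L^{-1}A}_{i,j} = U_{i,j}$. Because $U$ is \emph{unit} upper triangular, $U_{i,j} = 0$ whenever $i > j$ and $U_{i,i} = 1$; therefore for every $(i,j) \in E$ (that is, $i \geq j$) we have $\parenth{L^{-1}A}_{i,j} = I_{i,j}$. Thus $L^{-1}$ satisfies \eqref{eq:def_Lhat} and has the correct support, so by uniqueness $\hat{L}^{-1} = L^{-1}$.

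Finally, I would establish exactness of the resulting log-determinant estimate. The approximation is $-\log\abs{\hat{L}^{-1}} = -\log\abs{L^{-1}} = \log\abs{L}$, while $\abs{A} = \abs{L}\abs{U} = \abs{L}$ since the unit upper-triangular matrix $U$ has determinant $1$. Hence $\log\abs{A} = \log\abs{L} = -\log\abs{\hat{L}^{-1}}$, which is exactly the estimate. I do not anticipate a genuine obstacle here: the argument is a verification rather than a construction, and the only point requiring care is to confirm that taking $E$ to be the entire lower triangle removes all sparsity constraints, so that the single candidate $L^{-1}$ — already forced to be lower triangular and to satisfy the defining equations — must coincide with $\hat{L}^{-1}$ by the uniqueness lemma.
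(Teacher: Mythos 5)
Your proof is correct, but it runs the uniqueness argument in the opposite direction from the paper and invokes a different key lemma. You take the exact factor $L^{-1}$, check that it has the required support (it is lower triangular, hence vanishes off $E$) and satisfies the defining equations \eqref{eq:def_Lhat} (because $L^{-1}A = U$ is unit upper triangular), and then conclude $\hat{L}^{-1} = L^{-1}$ from the uniqueness of the sparse approximate inverse, i.e. \cite[Lemma 3.1]{reusken02}. The paper argues in the reverse direction: it takes the constructed object $\hat{L}^{-1}$, observes that for the full pattern the defining equations force $\hat{U} = \hat{L}^{-1}A$ to be unit upper triangular, so that $A = \hat{L}\hat{U}$ is an $LU$ factorization of $A$, and then concludes $\hat{L} = L$ from the uniqueness of the $LU$ decomposition, \Cref{prop:unique_LU}. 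The two arguments are mirror images --- each verifies that one object satisfies the other's characterization and then appeals to uniqueness --- but the paper's version has the advantage of resting only on a fact stated and available inside the paper itself, whereas yours leans on the external lemma from \cite{reusken02} (which the paper does cite when asserting the construction is well defined, so this is legitimate). In exchange, your write-up makes the final exactness claim explicit: since $\abs{U} = 1$, the estimate $-\log\abs{\hat{L}^{-1}} = \log\abs{L}$ equals $\log\abs{A}$, a step the paper's proof leaves implicit in the phrase ``hence the corresponding approximation is exact.''
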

\begin{proof}
In this case, we have $\hat{L}^{-1}A=\hat{U}$ where $\hat{L}$ is lower triangular and $\hat{U}$ is unit upper triangular. The result now follows from the uniqueness of the $LU$ decomposition of $A$,
\Cref{prop:unique_LU}.
\end{proof}

Every sparsity pattern results in an over approximation to the log-determinant, so smaller approximations are more accurate \cite[Remark 3.11]{reusken02}. 
\begin{theorem}\label{theorem:monotonicity}
Let $E\subset \bar{E}$ be sparsity patterns. Then the approximation to the log-determinant of $A$ using $\bar{E}$ is smaller than or equal to the approximation corresponding to $E$.
\end{theorem}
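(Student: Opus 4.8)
The plan is to reduce the statement to a per-row, per-entry comparison and then invoke the Hadamard--Fischer inequalities. First I would write the approximation explicitly. Since $\hat{L}^{-1}$ is lower triangular, its log-determinant is the sum of the logs of its diagonal entries, so the approximation to $\log\abs{A}$ is $-\log\abs{\hat{L}^{-1}} = -\sum_{i=1}^n \log(\hat{L}^{-1})_{ii}$. Using the first of the three formulations above, the diagonal entry $(\hat{L}^{-1})_{ii}$ is the bottom-right entry of $A_i^{-1}$. Writing $\alpha_i\subseteq\set{1,\dots,i}$ for the column index set of row $i$ determined by $E$ (so that $A_i=A(\alpha_i)$ and $i=\max\alpha_i$), the adjugate/Cramer formula gives
\[
(\hat{L}^{-1})_{ii} = b_i^\top A_i^{-1} b_i = \frac{\abs{A(\alpha_i\setminus\set{i})}}{\abs{A(\alpha_i)}},
\]
with the convention $\abs{A(\emptyset)}=1$. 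Hence the approximation equals $\sum_{i=1}^n \log\frac{\abs{A(\alpha_i)}}{\abs{A(\alpha_i\setminus\set{i})}}$, a sum of per-row terms that is well defined because every principal submatrix of a positive definite matrix is positive definite and thus has positive determinant.

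Next I would reduce to a single change in a single row. Each entry $(i,j)\in\bar{E}\setminus E$ enlarges only the index set $\alpha_i$ of row $i$, leaving the other rows untouched; since $j\neq i$ and the pattern is lower triangular we have $j<i$, so $i$ remains the maximal index of the enlarged set. Adding the new entries one at a time, it suffices to show that for a fixed row, replacing $\alpha:=\alpha_i$ by $\bar\alpha:=\alpha\cup\set{k}$ with $k<i$ does not increase the corresponding term, that is,
\[
\frac{\abs{A(\bar\alpha)}}{\abs{A(\bar\alpha\setminus\set{i})}} \leq \frac{\abs{A(\alpha)}}{\abs{A(\alpha\setminus\set{i})}}.
\]

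The key step is the choice of index sets to feed into \Cref{prop:hadamard}. Take $S_1=\alpha$ and $S_2=(\alpha\setminus\set{i})\cup\set{k}$. Because $k\notin\alpha$, one checks that $S_1\cup S_2=\bar\alpha$ and $S_1\cap S_2=\alpha\setminus\set{i}$. The Hadamard--Fischer inequality then gives
\[
\abs{A(\bar\alpha)} \leq \frac{\abs{A(\alpha)}\,\abs{A(\bar\alpha\setminus\set{i})}}{\abs{A(\alpha\setminus\set{i})}},
\]
which rearranges exactly into the desired inequality; taking logs and summing the per-row decreases over all rows yields $\text{approx}(\bar{E})\leq\text{approx}(E)$. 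The main obstacle is not any single computation but spotting this pairing of sets: the comparison must be arranged so that the union and intersection of $S_1,S_2$ reproduce the four determinants appearing in the two ratios, after which the claim drops out of \Cref{prop:hadamard}.
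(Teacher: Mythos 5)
Your proposal is correct and follows essentially the same route as the paper: both reduce to a per-row comparison of the diagonal entries of $\hat{L}^{-1}$ via the adjugate formula $(\hat{L}^{-1})_{ii} = \abs{A(\alpha_i\setminus\set{i})}/\abs{A(\alpha_i)}$, and then apply \Cref{prop:hadamard} with exactly the same pairing of index sets (your $S_1=\alpha$, $S_2=\bar\alpha\setminus\set{i}$ is the paper's choice $\alpha=\mu$, $\beta=\nu\setminus\set{i}$). The only difference is your one-entry-at-a-time induction, which is harmless but unnecessary, since that same application of the inequality handles an arbitrary enlargement $\mu\subseteq\nu$ in a single step.
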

\begin{proof}
We consider the computation of the diagonal of $\hat{L}^{-1}$ on row $i$. We have 
sets of column indices $\mu,\nu$ for $E,\bar{E}$ on row $i$, respectively. 

We address the general situation where $\mu$ contains at least two indices. The result is based on \Cref{prop:hadamard} together with the adjoint construction of the inverse of a matrix. 
The bottom-right entry of $A_{i}^{-1}$, corresponding to $\mu$, is
\begin{equation}
\frac{\abs{A(\mu \backslash \set{i})}}{\abs{A(\mu)}}
\end{equation}
Similarly, the bottom-right entry of $A_{i}^{-1}$, corresponding to $\nu$, is
\begin{equation}
\frac{\abs{A(\nu \backslash \set{i})}}{\abs{A(\nu)}}
\end{equation}
The inequality now follows from \Cref{prop:hadamard} by setting \\
$\alpha = \mu$ and $\beta = \nu \backslash \set{i}$.
\end{proof}

\begin{remark}
\Cref{theorem:monotonicity} can alternatively be shown using the results of \cite[Section 3.3]{reusken02} 
\end{remark}

\section{Algorithm}
\label{sec:algorithm}

\begin{definition}
The primary components of the algorithm are the following: 
\begin{itemize}
\item
$E^j$ : sparsity pattern, stored as a boolean array, corresponding to the nonzero entries of $A^j$ 
(the $j$th power of $A$);
\item 
$D^j$ : approximation to the log-determinant corresponding to the sparsity pattern $E^j$;
\item 
$S^j$ : spline approximation to the log-determinant using $D^1,\dots,D^j$. 
\end{itemize}
\end{definition}

Note that \Cref{theorem:monotonicity} and \Cref{prop:exact} together imply that the approximations $D^j$ are monotonically decreasing to the exact value of the log-determinant. This is the basis for our algorithm.

\subsection{Sparse inverses}
\label{subsec:algorithm_sparse}

The algorithm is split into two steps. We first compute, in parallel, $m$ approximations to the log-determinant, i.e. $D^1,\dots,D^m$. These are created from nested sparsity patterns corresponding to the locations of nonzero elements of $A^j$. This choice of patterns was used in \cite{reusken02}. It is further discussed and justified in \cite{chow00}. 

The advantage to this approach is a savings in memory, as the full sparsity patterns do not need to be stored. Instead, each row is computed as needed from $E^1$.

\begin{algorithm}[ht]
\caption{Sparse approximate inverses}\label{alg:cap}
\begin{algorithmic}
\State \textbf{input} $A \in \mathbb{R}^{n \times n}$
\State \textbf{input} $m\in \mathbb{Z}^+$ \Comment{max power}
\State $E \gets density(A)$ \Comment{boolean array of nonzeros}
\State $D^{1},D^{2},...,D^{m} \gets 0$
\Comment{initialize output}
\For{ $i = 1,\dots,n$}  \Comment{parallel loop}
    \State $\alpha_{i} \gets E(i,:) $ \Comment{row grab, row $i$ of $E^1$}
    \For{$j = 1,\dots,m$}
        \State $\beta_{i}  \gets \alpha_{i}$ 
        \State $\beta_{i}(i+1:n) \gets 0$
        \State $A_{i} \gets A(\beta_i,\beta_i)$
        \Comment{extract submatrix}
        \State $L_{i},U_{i} \gets LU(A_{i})$ \Comment{LU factorization} 
        \State $D^j \gets D^j + \log((L_{i})_{n_i , n_i})$
        \Comment{$L_i$ is $n_i\times n_i$ }
        \State $\alpha_{i} \gets \alpha_i E $ 
        \Comment{update to row $i$ of $E^{j+1}$}
    \EndFor    
\EndFor
\end{algorithmic}
\end{algorithm}

\subsection{Spline Construction}
\label{subsec:algorithm_spline}

After computing $D^1,\dots,D^m$, the second step is to place the computed values on a graph and fit the data (interpolate) with a spline to provide a more accurate approximation to the log-determinant.

We think of the graph as being embedded on the real line, where the vertices live at the 
density
of the corresponding sparsity pattern $E^j$.  The unknown point that is to be approximated is set beyond the highest computed density. In our experiments, we set it to be 1.5 times further than the previous gap. Weights are computed as the inverse of the distance between vertices.

\begin{algorithm}
\caption{Spline interpolation}\label{alg:cap}
\begin{algorithmic}
\State \textbf{input} $D^{1},D^{2},...,D^{m}$
\Comment{approximations} 
\State \textbf{input} $x_{1},x_{2},...,x_{m}$
\Comment{densities of $E^j$} 
\State $x_{m+1} \gets x_m + 1.5(x_m-x_{m-1})$
\Comment{interpolation point}
\State $A,D \gets \mathbf{0}_{M+1\times M+1}$ \Comment{initialize Adjacency, Degree}
\For{$j=1,\dots,m$}
\State $A_{j,j+1} \gets x_{j+1}-x_{j}$
\State $A_{j+1,j} \gets x_{j}-x_{j-1}$
\EndFor
\State $diagonal(D) \gets RowSums(A)$
\Comment{fill diagonal of $D$}
\State $\mathcal{L} \gets D-A$
\Comment{Laplacian matrix}
\State $g_k \gets (D^{1},D^{2},...,D^{m})$
\Comment{known values}
\State $\mathcal{L}_k \gets \mathcal{L}(:\ , 1:m)$ 
\Comment{submatrix for known values}
\State $\mathcal{L}_u \gets \mathcal{L}(:\ , m+1)$ 
\Comment{submatrix for unknown value}
\State $g_u \gets 
\mathcal{L}_u^\top \mathcal{L}_k g_k / 
\mathcal{L}_u^\top \mathcal{L}_u $
\Comment{interpolated value}
\end{algorithmic}
\end{algorithm}

\subsection{Python code notes}

Our experiments were performed using Python code, and we make use of the SciPy package (version 1.10.1) \cite{2020SciPy-NMeth} for linear algebra operations. Parallelization is implemented using \emph{concurrent.futures} (version 3.2).  SciPy's sparse-LU algorithm calls the SuperLU library\cite{li99}, which is written in C. 

\section{Experimental results}
\label{sec:experiment}

Our experiments are mainly focused on the class of Laplacian matrices on $d$-dimensional grids, where the weight between adjacent points is 1. From an infinite matrix, we restrict to the rows and columns corresponding to a cube with side length $N$.  The $N\times N$ matrix in $d$ dimensions 
is denoted 
$\mathcal{L}(N,d)$, e.g. $\mathcal{L}(2,2)$ represents the matrix

\begin{equation}
\begin{bmatrix}
4 & -1 & -1 & 0\\
-1 & 4 & 0 & -1\\
-1 & 0 & 4 & -1\\
0 & -1 & -1 & 4
\end{bmatrix}
\end{equation}

\subsection{Motivating examples}

Here, we show some comparisons with an exact method for computation of the log-determinant of sparse positive definite matrices. The exact algorithm is Sparse-LU from the Python package SciPy \cite{2020SciPy-NMeth}.

First, we consider two matrices and compare computation time, \Cref{tab:compare_compute_time}. The key observations are
\begin{itemize}
\item 
The total time to compute seven decreasing approximations to the log-determinant is less than the time for the Sparse-LU algorithm. 
\item 
There is a clear trend in the approximations 
that we can use to extrapolate and make a more precise approximation.
\item 
Increasing the size of a sparsity pattern increases the cost (more shared memory between processors), so we aim to compute as few approximations as possible and extrapolate the results. 
\end{itemize}

Next, we compare based on memory in \Cref{tab:compare_memory}. 
Again we see substantially lower system requirements.

\begin{table}
\centering
\caption{
Computation time (seconds) for computing $D^1,\dots,D^m$ simultaneously and approximation  $D^m$ are reported. 
}
\label{tab:compare_compute_time}
\small
\setlength{\tabcolsep}{3pt}
\begin{tabular}{c||c|c|c|c}
Matrix 
& \multicolumn{2}{c|}{$\mathcal{L}(15,4)$} 
& \multicolumn{2}{c}{$\mathcal{L}(16,4)$} 
\\ \hline 
Size 
& \multicolumn{2}{c|}{$50,625\times 50,625$}
& \multicolumn{2}{c}{$65,536\times 65,536$} 
\\ \hline 
Density
& \multicolumn{2}{c|}{$1.7e-4$} 
& \multicolumn{2}{c}{$1.3e-4$}   
\\ \hline \hline
$m$ & time & $D^m$ & time & log-det\\ \hline 
1    &14.5&102227.3 &18.8&132319.1\\ \hline 
2    &15.5&101778.7 &20.4&131732.7\\ \hline 
3    &15.9&101665.4 &20.4&131583.8\\ \hline 
4    &16.7&101627.3 &22.1&131533.3\\ \hline 
5    &16.5&101612.3 &21.6&131513.4\\ \hline  
6    &25.5&101605.9 &34.4&131504.7\\ \hline 
7    &57.3&101602.8 &78.6&131500.6\\ \hline 
\hline
Sparse-LU  &328.3& 101599.6 &488.0&131496.0\\
\end{tabular}
\label{tab:compare_compute_time1}
\end{table}

\begin{table}
\centering
\caption{Memory comparison between Scipy's Sparse-LU and our algorithm. In each case, we compute $D^1,\dots,D^4$. Maximum memory used is reported in kilobytes. }
\label{tab:compare_memory}
\small
\setlength{\tabcolsep}{3pt}
\begin{tabular}{c||c|c}
Matrix 
& Our Algorithm 
& Sparse-LU \\ \hline \hline
$\mathcal{L}(15,3)$ & 85,908 & 98,496\\ \hline
$\mathcal{L}(25,3)$ & 136,224 & 348,888\\ \hline
$\mathcal{L}(35,3)$ & 248,744  & 1,334,308\\ \hline
$\mathcal{L}(45,3)$ & 438,696 & 4,617,176\\ \hline \hline
$\mathcal{L}(15,4)$ & 408,904 & 7,510,124\\ 
\end{tabular}
\label{tab:compare_memory1}
\end{table}

\subsection{Spline improvement experiments}
  
In \Cref{tab:compare_algs1}, we see the improvement from using spline approximation for several matrices. 
In \Cref{fig:compare_algs}, we plot several approximations for a single matrix, and we see how the spline approximations $S^j$ are converging to the true value faster and are consistently more accurate than $D^j$ alone.
This figure also shows the trend of the $D^j$ that we exploit.

\begin{table}
\centering
\caption{Improvement using splines. 
Error is reported as relative error. 
Note that $S^3$ is computed  using $D^1,D^2,D^3$.
}
\label{tab:compare_algs}
\small
\setlength{\tabcolsep}{3pt}
\begin{tabular}{c||c|c}
Matrix &  $D^4$ error &  $S^3$ error \\ \hline \hline
$\mathcal{L}(15,3)$&0.11\%&0.002\%\\ \hline
$\mathcal{L}(25,3)$&0.145\%&0.032\%\\ \hline
$\mathcal{L}(35,3)$&0.163\%&0.047\%\\ \hline
$\mathcal{L}(45,3)$&0.173\%&0.057\%\\  \hline \hline
$\mathcal{L}(15,4)$&0.027\%&0.019\%
\end{tabular}
\label{tab:compare_algs1}
\end{table}

\begin{figure}
    \centering
    \centerline{\includegraphics[width=0.45\textwidth]{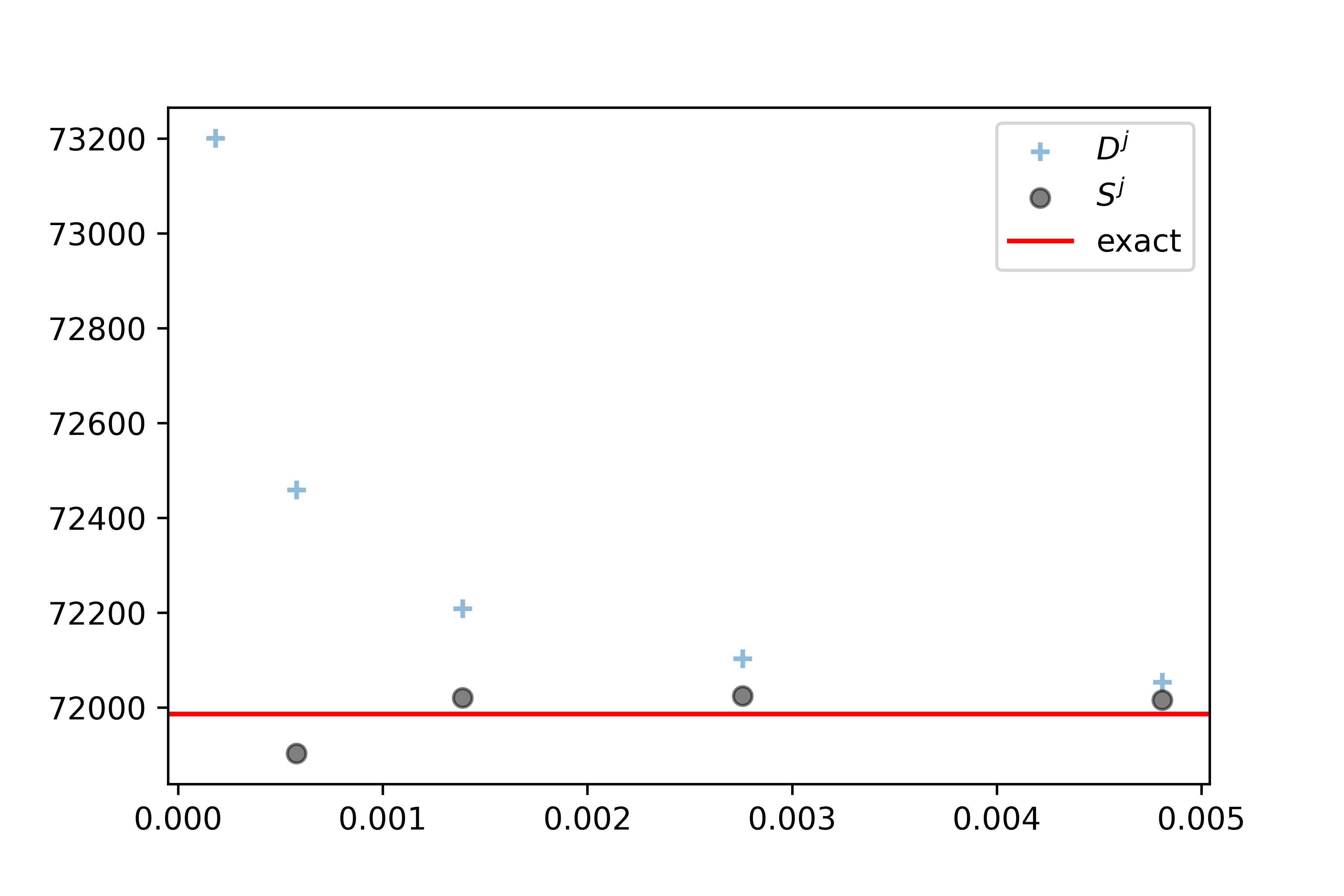}}
    \caption{
    Convergence of the approximations $D^j$ and spline approximations $S^j$ to the exact log-determinant of $\mathcal{L}(35,3)$.
    The $x$-axis represents the 
    density
    of the approximation being considered. 
    }
    \label{fig:compare_algs}
\end{figure}

\section{Summary and discussion}
\label{sec:summary}

We have presented a computationally efficient algorithm for the approximation of the log-determinant of large, sparse, positive definite matrices.  In our experiments, the spline-based approximation uses the smooth decay of the data to more accurately predict the results, reducing computational demands. We have also provided a new proof of the monotonicity of the approximations with respect to sparsity pattern inclusion.

The choice to use graph based splines was based on the discrete structure of the sparsity patterns.  In our current formulation, we use a particular path subgraph, and our data is essentially one-dimensional. However, it is possible that other subgraphs may provide more information, leading to higher accuracy.  Additional considerations include  adaptive sparsity pattern constructions.

The current state of our algorithm shows promise, and we plan further development for future testing. A particular comparison of interest is with Incomplete LU-decomposition
\cite{meijerink77}, which has some similarities to sparse approximate inverses. Essentially, incomplete-LU creates a sparse approximation to $L$ in $A=LU$, rather than a sparse approximation to $L^{-1}$ in $L^{-1}A=U$.
One potential advantage to sparse approximate inverses is that we do not need to store $\hat{L}^{-1}$, and preliminary experiments indicate that we have higher accuracy for comparable memory usage.

\bibliographystyle{IEEEtran}
\bibliography{graph.bib}

\end{document}